\documentclass[12pt]{amsart}

\usepackage{amssymb}
\usepackage{amsmath}
\usepackage{amsthm}

\usepackage{color}

\usepackage[pagebackref,hypertexnames=false, colorlinks, citecolor=red, linkcolor=red]{hyperref} %,hypertexnames=false,colorlinks,[pagebackref]
\usepackage[backrefs]{amsrefs}

\allowdisplaybreaks

\textwidth =158mm
\textheight =225mm
\oddsidemargin 2mm
\evensidemargin 2mm
\headheight=13pt
\setlength{\topmargin}{-0.6cm}

\parindent=13pt

\def\rr{{\mathbb R}}

\def\fz{\infty}

\def\lz{\lambda}

\def\tbz{{\Delta_\lz}}
\def\dmz{{dm_\lz}}

\def\rrp{{{\mathbb\rr}_+}}

\def\inzf{{\int_0^\fz}}

\def\lpz{{L^p(\rrp,\, dm_\lz)}}

\newtheorem{thm}{Theorem}[section]
\newtheorem{lem}[thm]{Lemma}%[section]
%[section]
%[section]
%[section]
%[section]
%[section]

\numberwithin{equation}{section}

\begin{document}

\arraycolsep=1pt

\title{The Two-Weight Inequality for the Poisson Operator in the Bessel Setting}

\author[J. Li]{Ji Li}
\address{Ji Li, Department of Mathematics\\ Macquarie University\\ NSW, 2109, Australia.}
\email{ji.li@mq.edu.au}
\thanks{J. Li's research supported by ARC DP 160100153.}

\author[B. D. Wick]{Brett D. Wick}
\address{Brett D. Wick, Department of Mathematics\\ Washington University -- St. Louis\\ One Brookings Drive\\ St. Louis, MO USA 63130-4899}
\email{wick@math.wustl.edu}
\thanks{B. D. Wick's research supported in part by National Science Foundation DMS grants \#0955432 and \#1560955.}

\subjclass[2010]{Primary: 42B20}
\keywords{Bessel operator}

\date{\today}

\begin{abstract}
Fix $\lambda>0$. Consider the Bessel operator $\Delta_\lambda:=-\frac{d^2}{dx^2}-\frac{2\lambda}{x}\frac d{dx}$ on $\mathbb{R}_+:=(0,\infty)$ and the harmonic conjugacy introduced by Muckenhoupt and Stein. We provide the two-weight inequality for the Poisson operator $\mathsf{P}^{[\lambda]}_t=e^{-t\sqrt{\Delta_\lambda}}$  in this Bessel setting.  In particular, we prove that for a measure $\mu$ on $\mathbb{R}^2_{+,+}:=(0,\infty)\times (0,\infty)$ and $\sigma$ on $\mathbb{R}_+$:
$$
\|\mathsf{P}^{[\lambda]}_\sigma(f)\|_{L^2(\mathbb{R}^2_{+,+};\mu)} \lesssim \|f\|_{L^2(\mathbb{R}_+;\sigma)},
$$
if and only if testing conditions hold for the the Poisson operator and its adjoint.  Further, the norm of the operator is shown to be equivalent to the best constant in the testing conditions.
\end{abstract}

\maketitle

\section{Introduction and Statement of Main Results\label{s1}}

The aim of this paper is to provide the necessary and sufficient conditions for a two-weight inequality for the Poisson operators in the Bessel setting.  The theory of classical harmonic analysis is considered to be intimately connected to the Laplacian; changing the differential operator introduces new challenges and directions to explore.
In 1965, Muckenhoupt and Stein in \cite{ms} introduced a notion of conjugacy associated with this Bessel operator $\tbz$,
which is defined by
\begin{equation*}%\label{bessel 1}
\tbz f(x):=-\frac{d^2}{dx^2}f(x)-\frac{2\lz}{x}\frac{d}{dx}f(x),\quad x>0.
\end{equation*}
They developed a theory in the setting of
$\tbz$ which parallels the classical one associated to $\Delta$.  For $p\in[1, \fz)$, $\rr_+:=(0, \fz)$ and $\dmz(x):= x^{2\lz}\,dx$ results on $\lpz$-boundedness of conjugate
functions and fractional integrals associated with $\tbz$ were
obtained.  Since then, many problems based on the Bessel context were studied;
see, for example, \cites{ak,bcfr,bfbmt,bfs,bhnv,k78,v08,yy}. In particular,
the properties and $L^p$ boundedness $(1<p<\infty)$ of Riesz transforms
\begin{equation*}%\label{riesz}
R_{\Delta_\lambda}f := \partial_x (\Delta_\lz)^{-\frac{1}{2}}f
\end{equation*}
related to $\Delta_\lz$ have been studied in \cites{ak,bcfr,bfbmt,ms,v08}.

Next we recall the Poisson integral, the conjugate Poisson integral in the Bessel setting.
As in \cite{bdt}, let $\left\{\mathsf{P}^{[\lz]}_t\right\}_{t>0}$ be the Poisson semigroup $\left\{e^{-t\sqrt{\Delta_\lz}}\right\}_{t>0}$ defined by
\begin{equation*}
\mathsf{P}^{[\lz]}_tf(x):=\inzf \mathsf{P}^{[\lz]}_t(x,y)f(y)y^{2\lz}\,dy,
\end{equation*}
where
\begin{equation*}
\mathsf{P}^{[\lz]}_t(x,y)=\inzf e^{-tz}(xz)^{-\lz+\frac{1}{2}}J_{\lz-\frac{1}{2}}(xz)(yz)^{-\lz+\frac{1}{2}}J_{\lz-\frac{1}{2}}(yz)z^{2\lz}\,dz
\end{equation*}
and $J_\nu$ is the Bessel function of the first kind and order $\nu$.  Weinstein \cite{w48} established the following formula for $\mathsf{P}^{[\lz]}_t(x,y)$: $t,\,x,\, y\in\mathbb{R}_+$,
\begin{equation*}
\mathsf{P}^{[\lz]}_t(x,y)=\frac{2\lz t}{\pi}\int_0^\pi\frac{(\sin\theta)^{2\lz-1}}{(x^2+y^2+t^2-2xy\cos\theta)^{\lz+1}}\,d\theta.
\end{equation*}
%The $\Delta_\lambda$-conjugate of the Poisson integral of $f$ is defined by
%$$  Q^{[\lz]}_tf(x):=\inzf Q^{[\lz]}_t(x,y)f(y)y^{2\lz}\,dy,
% $$
%where
%\begin{equation*}
%Q^{[\lz]}_t(x,y)=\frac{-2\lz }{\pi}\int_0^\pi\frac{ (x-y\cos\theta) (\sin\theta)^{2\lz-1}}{(x^2+y^2+t^2-2xy\cos\theta)^{\lz%+1}}\,d\theta.
%\end{equation*}
%From this, we deduce that for any $x,\,y\in\rrp$,
%\begin{eqnarray}\label{riesz kernel}
%\riz(x, y)=\partial_x\inzf \mathsf{P}^{[\lz]}_t(x,y)\,dt=-\dfrac{2\lz}{\pi}\dint_0^\pi\dfrac{(x-y\cos\theta)(\sin\theta)^{2\lz-1}}
%{(x^2+y^2-2xy\cos\theta)^{\lz+1}}\,d\theta = \lim_{t\to0} Q^{[\lz]}_tf(x).\quad\quad
%\end{eqnarray}

Let  $\sigma$ be a weight on $\mathbb{R}_+:=(0,\infty)$ and $\mu$ be a weight on $\mathbb{R}^2_{+,+}:=(0,\infty)\times (0,\infty)$.
Consider the inequality
\begin{align}\label{two weight}
\|\mathsf{P}^{[\lz]}_\sigma(f)\|_{L^2(\mathbb{R}^2_{+,+};\mu)} \leq \mathcal{N}\|f\|_{L^2(\mathbb{R}_+;\sigma)},
\end{align}
where
\begin{align*}
\mathsf{P}^{[\lz]}_\sigma(f)(x,t):= \inzf \mathsf{P}^{[\lz]}_t(x,y)f(y) \,d\sigma(y).
\end{align*}
We use $\mathsf{P}^{[\lz],*}_\mu$ to denote the dual operator of $\mathsf{P}^{\lz}$, defined as follows
\begin{align*}
\left\langle \mathsf{P}^{[\lz]}_\sigma(f), g \right\rangle_{L^2(\mathbb{R}_{+,+}^2;\mu)} &=\int_{\mathbb{R}^2_{+,+}} \inzf \mathsf{P}^{[\lz]}_t(x,y)f(y) \,d\sigma(y) g(x,t)\,d\mu(x,t)\\
&= \inzf   \int_{\mathbb{R}^2_{+,+}}  \mathsf{P}^{[\lz]}_t(x,y)g(x,t)\,d\mu(x,t)   f(y) \,d\sigma(y)\\
&= \inzf  \mathsf{P}^{[\lz],*}_\mu(g)(y)f(y) \,d\sigma(y).
\end{align*}
So in particular,
\begin{align*}
\mathsf{P}^{[\lz],*}_\mu(g)(y):= \int_{\mathbb{R}^2_{+,+}} \mathsf{P}^{[\lz]}_t(x,y)g(x,t) \,d\mu(x,t).
\end{align*}
We also observe that a simple duality argument provides for:
\begin{align}\label{two weight dual}
\left\Vert \mathsf{P}^{[\lz],*}_\mu(\phi)\right\Vert_{L^2(\mathbb{R}_+;\sigma)} \lesssim \mathcal{N}\left\Vert \phi\right\Vert_{L^2(\mathbb{R}^2_{+,+};\mu)}
\end{align}

The main result of this paper is the following two-weight inequality for the Poisson operator $\{\mathsf{P}^{\lz}_t\}_{t>0}$. 
\begin{thm}\label{main theorem}
Let $\sigma$ be a measure on $\mathbb{R}_+$ and $\mu$ a measure on $\mathbb{R}_{+,+}^2$.
The following conditions are equivalent:
\begin{enumerate}
\item[(1)] The two-weight inequality \eqref{two weight} holds.  Namely,
\begin{align*}
\|\mathsf{P}^{[\lz]}_\sigma(f)\|_{L^2(\mathbb{R}^2_{+,+};\mu)} \leq \mathcal{N}\|f\|_{L^2(\mathbb{R}_+,\sigma)};
\end{align*}
\item[(2)] The testing conditions below hold uniformly over all intervals $I\subset (0,\infty)$
\begin{align*}
\int_{\hat{3I}} \mathsf{P}^{[\lz]}_\sigma(1_I)(x,t)^2 \,d\mu(x,t) & \leq \mathcal{F}^2 \sigma(I),\\
\int_{3I} \mathsf{P}^{[\lz],*}_\mu(t1_{\hat{I}})(y)^2 \,d\sigma(y) & \leq \mathcal{B}^2 \int_{\hat{I}} t^2 \,d\mu(x,t).
\end{align*}
\end{enumerate}
Moreover, we have that $\mathcal{N}\simeq \mathcal{F}+\mathcal{B}$.
Here $1_I$ is the indicator of $I$, $\hat{I}= I\times [ 0, |I| ]$.  
\end{thm}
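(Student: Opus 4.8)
The plan is to follow the now-standard ``two-weight testing'' paradigm of Nazarov--Treil--Volberg and Sawyer, adapted to the Bessel--Poisson kernel. The direction $(1)\Rightarrow(2)$ is immediate: testing on $f=1_I$ and on $\phi=t1_{\hat I}$ in \eqref{two weight} and \eqref{two weight dual} gives $\cf\le\cn$ and $\cb\le\cn$ respectively, since $\int_{\hat I}t^2\,d\mu$ is exactly $\|\,t1_{\hat I}\|^2_{L^2(\rr^2_{+,+};\mu)}$ up to the kernel factor. So the whole content is $(2)\Rightarrow(1)$, and the goal is to dominate $\cn\ls\cf+\cb$.

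For $(2)\Rightarrow(1)$ I would first record the geometric properties of the kernel $\plz(x,y)$ that make it behave like a genuine Poisson kernel on the space of homogeneous type $(\rr_+,|\cdot|,\dmz)$: from Weinstein's formula one gets the size estimate $\plz(x,y)\simeq \frac{t}{(t+|x-y|)}\cdot\frac{1}{m_\lz(I(x,t+|x-y|))}$, together with the standard off-diagonal decay and smoothness in $y$ and in $t$. These are presumably established in the cited Bessel literature; I would state them as a lemma. The crucial structural fact is a \emph{monotonicity/quasi-orthogonality} principle: for a ``cube'' $I\subset\rr_+$ and its Carleson box $\hat I$, the part of $\plz_\sigma(f)$ seen on $\hat{3I}$ is controlled, and the pieces coming from dyadically separated scales are almost orthogonal in $L^2(\mu)$ because of the $t$-decay of the kernel.

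The main line of argument is then a dyadic decomposition of the bilinear form $\langle \plz_\sigma f,g\rangle_\mu$. I would fix a dyadic grid on $\rr_+$ (adapted to $\dmz$), split $f=\sum_I \Delta_I f$ into martingale differences with respect to $\sigma$, and for each $I$ split the ``upper half-space'' contribution of $g$ into the local part over $\hat{3I}$ and the ``far'' part. The local parts are handled directly by the forward testing constant $\cf$ after a Carleson-embedding argument: one shows $\sum_I \mathbf{1}_{\hat{3I}}\,\plz_\sigma(1_I)^2\,d\mu$ satisfies a Carleson measure bound with constant $\cf^2$, which by the Carleson embedding theorem on $\rr^2_{+,+}$ gives the desired estimate on $\|f\|_{L^2(\sigma)}$. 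The ``stopping-time'' skeleton — building Calder\'on--Zygmund stopping cubes for $f$ relative to $\sigma$, so that on each stopping tree $f$ has controlled averages — is what organizes the global sum; the far/paraproduct terms are then absorbed by $\cb$ via the dual testing condition, using the smoothness of the kernel to pass from $\plz_\sigma(\Delta_I f)$ to $\plz_\sigma(\mathbf{1}_I)$ times an average, with the error controlled by the off-diagonal decay. A key simplification special to the Poisson operator (as opposed to a general CZ operator) is that there is no ``energy condition'' needed separately: the half-space variable $t$ provides a built-in gain, so the two testing conditions as stated already encode what in other settings requires the energy/Muckenhoupt hybrid conditions. This is the technical heart and the place where the Bessel-specific kernel estimates must be invoked carefully.

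I expect the main obstacle to be two-fold. First, the measure $\dmz(x)=x^{2\lz}\,dx$ is only doubling, not Lebesgue, so every dyadic/stopping-time construction and every Carleson-embedding step must be carried out in the space-of-homogeneous-type framework, and one must be careful that ``$3I$'' and ``$\hat{3I}$'' dilations interact correctly with $m_\lz$; near the boundary $x=0$ the measure degenerates and the intervals $I(x_0,r)$ with $r\gg x_0$ behave differently from those with $r\ll x_0$, so the kernel bounds split into cases that have to be tracked. Second, controlling the global/paraproduct term genuinely requires the \emph{dual} testing condition in the form stated — with the weight $t$ and the box $\hat I$ — and matching the dyadic decomposition of the upper half-space to the structure of $\mu$ (which lives on $\rr^2_{+,+}$ and need not be doubling in any sense) is delicate; the interplay between the one-dimensional grid on $\rr_+$ and the two-dimensional support of $\mu$ is where I would expect the argument to be most intricate. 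Once these are in place, collecting the local term ($\ls\cf\|f\|_{L^2(\sigma)}\|g\|_{L^2(\mu)}$) and the global term ($\ls\cb\|f\|_{L^2(\sigma)}\|g\|_{L^2(\mu)}$) yields $\cn\ls\cf+\cb$, completing the equivalence and the norm comparison.
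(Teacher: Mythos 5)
Your reduction of the problem to $(2)\Rightarrow(1)$ and the necessity argument are fine, but the sufficiency sketch has a genuine gap at its center. You propose an NTV-style scheme: split $f$ into martingale differences $\Delta_I f$ with respect to $\sigma$, treat the local part by a Carleson-embedding argument driven by $\mathcal{F}$, and absorb the far/paraproduct terms into $\mathcal{B}$ ``using the smoothness of the kernel to pass from $\mathsf{P}^{[\lambda]}_\sigma(\Delta_I f)$ to $\mathsf{P}^{[\lambda]}_\sigma(1_I)$ times an average, with the error controlled by the off-diagonal decay.'' This is precisely the step where such decompositions generate energy/Poisson--$A_2$-type error terms, and those terms are not in general dominated by the two testing constants; your assertion that ``no energy condition is needed because the half-space variable $t$ provides a built-in gain'' is exactly the statement that has to be proved, and you give no mechanism for it. Worse, the mechanism that actually makes the two-testing characterization true for this operator is its \emph{positivity}, and passing to martingale differences $\Delta_I f$ (which have mean zero and change sign) forfeits positivity at the outset, so the standard maximum-principle/good-$\lambda$ tools that close the argument are no longer available in your setup. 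As written, the proposal would leave you with uncontrolled off-diagonal error terms.

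The paper's proof takes a different and self-contained route, following Sawyer's original argument for Poisson integrals as organized in Lacey's primer: one dualizes to $\mathsf{P}^{[\lambda],*}_\mu$, forms the level sets $\Omega_k=\{\mathsf{P}^{[\lambda],*}_\mu(\phi)>2^k\}$ with Whitney intervals $\mathcal{I}_k$ (whose existence near the boundary $x=0$ requires a short case analysis special to $\mathbb{R}_+$), proves a maximum principle $\mathsf{P}^{[\lambda],*}_\mu(\phi 1_{(3\hat I)^c})(x)\le 19^{\lambda+1}2^k$ on Whitney intervals by direct comparison of the Weinstein kernel at two points, and then splits the resulting sum into a piece controlled by the forward testing condition (bounded overlap of Whitney boxes) and a piece controlled by the backward testing condition via principal (stopping) intervals $\mathcal{G}$, a dyadic maximal function bounded on $L^2(\tilde\mu)$, and Lacey's counting lemma. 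No martingale differences, no quasi-orthogonality, and no kernel smoothness beyond the two-point comparison are used; positivity of the kernel is what replaces the energy considerations you are implicitly appealing to. If you want to salvage your outline, you would need to either prove that the relevant energy functional is controlled by $\mathcal{F}+\mathcal{B}$ in this Bessel setting, or abandon the martingale-difference decomposition in favor of a positivity-based level-set argument of the above type.
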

It is immediate that the testing conditions are necessary and that $\mathcal{F}+\mathcal{B}\lesssim \mathcal{N}$.  The forward condition follows by testing \eqref{two weight} on an indicator function and restricting the region of integration.  The backward condition follows by testing the dual inequality \eqref{two weight dual} on the indicator of a set and then again restricting the integration.  In the remainder of the paper we address how to show that these testing conditions are sufficient to prove \eqref{two weight} and \eqref{two weight dual}.  In the course of the proof we will also demonstrate that $\mathcal{N}\lesssim \mathcal{F}+\mathcal{B}$.

Below we use the notation $X\lesssim Y$ to denote that there is an absolute constant $C$ so that $X\leq CY$.  If we write $X\approx Y$, then we mean that $X\lesssim Y$ and $Y\lesssim X$.  And, $:=$ means equal by definition.

%\section{ $\Delta_\lambda$ and $S_\lambda$}\label{s2}

%In this section we recall ......

\section{Proof of the Two-Weight Inequality for  \texorpdfstring{$\left\{\mathsf{P}^{[\lz]}_t\right\}_{t>0}$}{the Bessel Poisson Operator}}
\label{s:MainResult}

We now prove that the testing conditions imply the norm inequality for the Poisson operator $\mathsf{P}^{[\lz]}_t$ in the Bessel setting, following the line of Sawyer's original argument, \cite{s88}, and using some modification contained in the proof given by Lacey, \cite{L}.

To begin with, we assume that $\sigma$ is restricted to some large dyadic interval $I_0\subset \mathbb{R}_+$, and  that $\mu$ is restricted to $3\hat{I}_0$.  There is no loss in assuming that the measures $\sigma$ and $\mu$ are compactly supported since the resulting estimates will not depend upon the support in any way, and we can then pass to the general case through a standard limiting argument.

To prove \eqref{two weight}, by duality, it suffices to prove that for every $\phi\in L^2(\mathbb{R}^2_{+,+})$,
\begin{align}\label{two weight p}
\int_{\mathbb{R}_+} | \mathsf{P}^{[\lz],*}_\mu(\phi)(x)|^2 \,d\sigma(x) \lesssim \left(\mathcal{F}^2+\mathcal{B}^2\right)  
\int_{\mathbb{R}^2_{+,+}} | \phi(y,t)|^2 \,d\mu(y,t).
\end{align}
Before we continue, we point out that it suffices to prove \eqref{two weight p}
for $\phi\in C^\infty(\mathbb{R}^2_{+,+})$.
And before
 the decomposition of $\int_{\mathbb{R}_+} | \mathsf{P}^{[\lz],*}_\mu(\phi)(x)|^2 \,d\sigma(x) $ as in the left-hand side of the targeted inequality above, we first claim that
\begin{align}\label{claim finite}
\int_{\mathbb{R}_+} | \mathsf{P}^{[\lz],*}_\mu(\phi)(x)|^2 \,d\sigma(x) <\infty.
\end{align}
In fact, since $\sigma$ is restricted to  $I_0$ and $\mu$ is restricted to $3\hat{I}_0$, we see that
\begin{align*}
&\int_{\mathbb{R}_+} | \mathsf{P}^{[\lz],*}_\mu(\phi)(x)|^2 \,d\sigma(x) \\
&=\int_{I_0} \bigg| \int_{3\hat{I}_0}  \mathsf{P}^{[\lz]}_t(y,x)\phi(y,t)\,d\mu(y,t)  \bigg|^2 \,d\sigma(x) \\
&=\int_{I_0} \bigg| \int_{3\hat{I}_0}  \frac{2\lz t}{\pi}\int_0^\pi\frac{(\sin\theta)^{2\lz-1}}{(y^2+x^2+t^2-2yx\cos\theta)^{\lz+1}}\,d\theta\ \phi(y,t)\,d\mu(y,t)  \bigg|^2 \,d\sigma(x) \\
&\leq C\int_{I_0} \mu(3\hat{I}_0) \int_{3\hat{I}_0} \bigg[{1\over m_\lambda(I(y,t)) + m_\lambda(I(y,|x-y|))}\bigg( {t\over t+|x-y|} \bigg)^\gamma\bigg]^2\\
&\qquad\qquad\times |\phi(y,t)|^2\,d\mu(y,t) \,d\sigma(x)\\
&<\infty,
\end{align*}
where the first inequality follows from H\"older's inequality and the pointwise estimate of the Poisson kernel $ \mathsf{P}^{[\lz]}_t(y,x)$ contained in \cite{yy} with the constant $\gamma>0$, and the last inequality follows from 
the fact that $\phi \in C^\infty(\mathbb{R}^2_{+,+})$, i.e., $\phi$ has compact support $G$ contained in 
$\mathbb{R}^2_{+,+}$, which implies that for every $(x,t)\in G$, $t\geq t_0>0$.
Thus, we obtain that the claim \eqref{claim finite} holds.

Take a non-negative function $\phi \in L^2 (\mathbb{R}^2_{+,+};\mu)$. Consider the open sets
$$ \Omega_k:=\left\{ x\in\mathbb{R}_+: \mathsf{P}^{[\lz],*}_\mu(\phi)(x)  >2^k \right\}. $$
Let $\mathcal{I}_k$ be a Whitney decomposition of $\Omega_k$. Namely, an interval $I\in \mathcal{D}$ is in 
$\mathcal{I}_k$ if and only if $I$ is maximal (in the sense of set containment) subject to the conditions: $(3I \cap \mathbb{R}_+)\subset \Omega_k$ and 
$(5I\cap \mathbb{R}_+)\not\subset\Omega_k$.  Here and throughout the whole paper, 
we use $\mathcal D$ to denote the system of dyadic intervals in $\mathbb R_+$.

We first show that these $\mathcal{I}_k$ are well-defined, i.e., there exist such maximal interval $I$ satisfying $(3I \cap \mathbb{R}_+)\subset \Omega_k$ and 
$(5I\cap \mathbb{R}_+)\not\subset\Omega_k$. Recall that a general interval $I(x,r)$ centered at $x$ with radius $r$ in the Bessel setting is defined as 
\begin{align}\label{interval} 
I(x,r)=(x-r,x+r) \cap \mathbb{R}_+.
\end{align}

Now choose an arbitrary dyadic interval $I\subset \Omega_k$ such that $3I\subset \Omega_k$. Consider the interval $5I$. We have the following 3 cases:

\smallskip

Case 1:  $3I\subset  \mathbb{R}_+$ and $5I \subset  \mathbb{R}_+$.

If $5I\cap \Omega_k^c \not=\emptyset$, then we put this $I$ in $\mathcal{I}_k$.  If $5I\subset \Omega_k$, then we consider the father of $I$, denote it by $\tilde{I}$. 

\smallskip
Case 2:  $3I\subset  \mathbb{R}_+$ and $5I \cap  (-\infty,0) \not=\emptyset$.

In this case, the dyadic interval $I$ must be the first one after the dyadic interval $(0, |I|)$ in the level of length $I$. 
Thus, based on the definition of general intervals in \eqref{interval}, $5I=(0,4|I|)$.
If $5I\cap \Omega_k^c \not=\emptyset$, then we put this $I$ in $\mathcal{I}_k$.  If $5I\subset \Omega_k$, then we consider the father of $I$, denote it by $\tilde{I}$. And in fact, $\tilde{I}= (0,2|I|)$. 

\smallskip
Case 3:  $3I  \cap  (-\infty,0) \not=\emptyset$ and $5I \cap  (-\infty,0) \not=\emptyset$.

In this case, the dyadic interval $I$ must be $(0, |I|)$.
Thus, based on the definition of general intervals in \eqref{interval}, $5I=(0,3|I|)$.
If $5I\cap \Omega_k^c \not=\emptyset$, then we put this $I$ in $\mathcal{I}_k$.  If $5I\subset \Omega_k$, then we consider the father of $I$, denote it by $\tilde{I}$. And in fact, $\tilde{I}= (0,2|I|)$. 
\smallskip

Combining all these three cases, we get that, if $I$ is not the maximal one, i.e. $5I\subset \Omega_k$, then we can further consider $\tilde{I}$, the father of $I$. And we can deduce that
\begin{align*}
\tilde{I} &\subset \Omega_k, {\rm\ since\ } \tilde{I}\subset 3I {\rm\ and\ } 3I\subset \Omega_k;\\
3\tilde{I} &\subset \Omega_k, {\rm\ since\ } 3\tilde{I}\subset 5I {\rm\ and\ } 5I\subset \Omega_k.
\end{align*}
Hence $\tilde{I} $ is the next right candidate, and it suffices to consider $5\tilde{I}$. By induction, we can always obtain a maximal dyadic interval $J$ subject to $(3J \cap \mathbb{R}_+)\subset \Omega_k$ and 
$(5J\cap \mathbb{R}_+)\not\subset\Omega_k$.

We further point out that these collections $\mathcal{I}_k$  satisfy the following properties:
\begin{enumerate}
\item[(1)] $\Omega_k=\cup_{I\in\mathcal{I}_k} I$, and the intervals $I\in\mathcal{I}_k$ are either equal or disjoint, except the endpoints;
\item[(2)] $3I\subset \Omega_k$ and 
$5I\not\subset\Omega_k$;
\item[(3)] If $I\in\mathcal{I}_k$ and $I'\in\mathcal{I}_{k'}$ with $I\subsetneq I'$, then $2^k > 2^{k'}$;
\item[(4)] For all $k\in\mathbb{Z}$, $\sum_{I\in\mathcal{I}_k} 1_{3I}(x) \leq C 1_{\Omega_k}$.
\end{enumerate}
\smallskip
Let $m$ be a large constant to determined later. Then from the Whitney decomposition, we have that 
the left-hand side of \eqref{two weight p} is bounded as follows.
\begin{align*}
\int_{\mathbb{R}_+} | \mathsf{P}^{[\lz],*}_\mu(\phi)(y)|^2 \,d\sigma(y) 
&= \sum_{k\in\mathbb{Z}}\int_{\Omega_{k+m}\backslash\Omega_{k+m+1}} | \mathsf{P}^{[\lz],*}_\mu(\phi)(y)|^2 \,d\sigma(y) \\
&\leq  \sum_{k\in\mathbb{Z}} 2^{2k}\sigma(\Omega_{k+m}\backslash\Omega_{k+m+1})\\
&=\sum_{k\in\mathbb{Z}} 2^{2k} \sum_{I\in\mathcal{I}_k}\sigma(I\cap (\Omega_{k+m}\backslash\Omega_{k+m+1})).
\end{align*}
We denote $F_k(I):= I \cap (\Omega_{k+m}\backslash\Omega_{k+m+1})$. Now let $\delta\in(0,1)$, to be chosen sufficiently small.  Then we have
\begin{align*}
\int_{\mathbb{R}_+} | \mathsf{P}^{[\lz],*}_\mu(\phi)(y)|^2 \,d\sigma(y) 
&=\sum_{k\in\mathbb{Z}} 2^{2k} \sum_{\substack{I\in\mathcal{I}_k\\ \sigma(F_k(I))<\delta \sigma(I) }}\sigma(F_k(I))
+\sum_{k\in\mathbb{Z}} 2^{2k} \sum_{\substack{I\in\mathcal{I}_k\\ \sigma(F_k(I))\geq\delta \sigma(I) }}\sigma(F_k(I))\\
&=:A+B.
\end{align*}
As for the term $A$, it is obvious that 
\begin{align*}
A&\leq \delta\sum_{k\in\mathbb{Z}} 2^{2k} \sum_{\substack{I\in\mathcal{I}_k\\ \sigma(F_k(I))<\delta \sigma(I) }}\sigma(I) \leq  \delta\sum_{k\in\mathbb{Z}} 2^{2k} \sigma(\Omega_k) \leq  \delta \int_{\mathbb{R}_+} | \mathsf{P}^{[\lz],*}_\mu(\phi)(y)|^2 \,d\sigma(y),
\end{align*}
which will be absorbed into the left-hand side provided that $\delta$ is sufficiently small, based on the priori condition as in \eqref{claim finite}.

Thus, to prove \eqref{two weight p}, it remains to show that term $B$ can be dominated in terms of the testing conditions, i.e.,
\begin{equation}\label{B}
B\lesssim \left(\mathcal{F}^2+\mathcal{B}^2\right)\left\Vert \phi\right\Vert_{L^2(\mathbb{R}_{+,+}^2;\mu)}^2.
\end{equation}

To continue, we first show that the Poisson operator $\mathsf{P}^{[\lz]}_{t}$ satisfies the following maximum principle.
\begin{lem}\label{lem2.1}
There exists a positive constant $C$ such that
\begin{align}\label{Pmu*}
\mathsf{P}^{[\lz],*}_\mu(\phi\cdot 1_{(3\hat{I})^c} )(x) < 19^{\lz+1}2^k
\end{align}
for all $x\in I$, $I\in\mathcal{I}_k$ and $k\in\mathbb{Z}$.
\end{lem}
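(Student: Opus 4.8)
The plan is to exploit the Whitney structure of $\mathcal{I}_k$, specifically property (2), which guarantees that $5I \not\subset \Omega_k$, i.e., there is a point $x^\ast \in 5I$ with $\mathsf{P}^{[\lz],*}_\mu(\phi)(x^\ast) \le 2^k$. The strategy is to compare $\mathsf{P}^{[\lz],*}_\mu(\phi\cdot 1_{(3\hat I)^c})(x)$ for $x \in I$ against $\mathsf{P}^{[\lz],*}_\mu(\phi\cdot 1_{(3\hat I)^c})(x^\ast) \le \mathsf{P}^{[\lz],*}_\mu(\phi)(x^\ast) \le 2^k$ (using $\phi \ge 0$), and to show that replacing $x$ by $x^\ast$ changes the kernel only by a bounded multiplicative factor. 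Writing out the definition,
$$
\mathsf{P}^{[\lz],*}_\mu(\phi\cdot 1_{(3\hat I)^c})(x) = \int_{(3\hat I)^c} \frac{2\lz t}{\pi}\int_0^\pi \frac{(\sin\theta)^{2\lz-1}}{(y^2+x^2+t^2-2yx\cos\theta)^{\lz+1}}\,d\theta\;\phi(y,t)\,d\mu(y,t),
$$
so it suffices to prove the pointwise kernel comparison
$$
\frac{y^2+(x^\ast)^2+t^2-2y x^\ast\cos\theta}{y^2+x^2+t^2-2yx\cos\theta} \le 19,
\qquad x\in I,\ x^\ast\in 5I,\ (y,t)\in (3\hat I)^c,
$$
uniformly in $\theta\in[0,\pi]$, which upon raising to the power $\lz+1$ yields the constant $19^{\lz+1}$.

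The key geometric step is the kernel comparison. The quantity $Q(u):=y^2+u^2+t^2-2yu\cos\theta$ satisfies $|x^2+(x^\ast)^2 - 2\text{(mixed terms)}|$ bounds that I would organize via the elementary identity $y^2 + u^2 + t^2 - 2yu\cos\theta \approx (y-u)^2 + t^2 + yu(1-\cos\theta) \approx (y-u)^2 + t^2 + yu\theta^2$, so the ratio is controlled by comparing $(y-x)^2$ with $(y-x^\ast)^2$ (and the $yx$ versus $yx^\ast$ factors). Since $x, x^\ast$ both lie in $5I$, we have $|x - x^\ast| \le 5|I|$; and since $(y,t)\notin 3\hat I = 3I\times[0,3|I|]$, either $y\notin 3I$ — forcing $|y - c_I| \ge \tfrac{3}{2}|I|$ hence $|y-x|, |y-x^\ast| \gtrsim |I|$ — or $t \ge 3|I|$. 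In the first sub-case $|y - x^\ast| \le |y-x| + 5|I| \lesssim |y-x|$, so $(y-x^\ast)^2 \lesssim (y-x)^2$; in the second sub-case $|y-x^\ast|^2 \le 2(y-x)^2 + 2\cdot 25|I|^2 \lesssim (y-x)^2 + t^2$. Likewise $y x^\ast \le y x + 5 y|I|$, and $y|I|$ is absorbed into $(y-x)^2 + t^2$ after splitting on whether $y \le 2x$ or $y > 2x$. Tracking the absolute constants carefully through these estimates — rather than being content with $\lesssim$ — is what produces the explicit value $19$; I would choose the thresholds in the Whitney construction ($3I$, $5I$) precisely so the arithmetic closes at $19$.

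The main obstacle, then, is not any deep analytic input but the careful bookkeeping of constants in the kernel ratio near the boundary $\partial\mathbb{R}_+$ and near the corner of the cone where $\theta$ is small and $y$ is comparable to $x$; one must verify that the Bessel interval convention $I(x,r) = (x-r,x+r)\cap\mathbb{R}_+$ (which can shrink intervals near $0$) does not spoil the comparison, and handle the degenerate Whitney cases (Cases 2 and 3 above, where $I$ abuts $0$) separately, noting that there $x, x^\ast \in (0, 5|I|)$ so $x^\ast \le 5|I|$ directly and the same estimates apply a fortiori. Once the pointwise bound $\mathsf{P}^{[\lz]}_t(y,x) \le 19^{\lz+1}\,\mathsf{P}^{[\lz]}_t(y,x^\ast)$ is established on $(3\hat I)^c$, integrating against $\phi\cdot 1_{(3\hat I)^c}\,d\mu \ge 0$ and using $\mathsf{P}^{[\lz],*}_\mu(\phi)(x^\ast) \le 2^k$ gives \eqref{Pmu*} immediately, with strict inequality coming from the strictness in the defining inequality of $\Omega_k^c \ni x^\ast$ together with the (possibly non-strict) kernel bound.
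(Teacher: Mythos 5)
Your proposal follows essentially the same route as the paper's proof: use the Whitney property to pick a comparison point $z\in 5I\cap\Omega_k^c$ (so $\mathsf{P}^{[\lambda],*}_\mu(\phi)(z)\le 2^k$), establish the pointwise kernel domination $\mathsf{P}^{[\lambda]}_t(x,y)\le 19^{\lambda+1}\,\mathsf{P}^{[\lambda]}_t(z,y)$ for all $(y,t)\notin 3\hat{I}$ by splitting into the cases $y\notin 3I$ and $y\in 3I$ (where $t\gtrsim|I|$), and then integrate against $\phi\,1_{(3\hat{I})^c}\,d\mu\ge 0$. The only difference is in the bookkeeping: instead of expanding $y^2+u^2-2yu\cos\theta=(y-u)^2+2yu(1-\cos\theta)$ and absorbing the mixed terms, the paper views $d_x=\sqrt{x^2+y^2-2xy\cos\theta}$ as a planar distance and applies the triangle inequality $d_z\le d_x+|z-x|$ together with the sharper bound $|z-x|<3|I|$ (rather than your $5|I|$), which is exactly what makes the arithmetic close at the stated constant $19^{\lambda+1}$.
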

\begin{proof}
Note that $I$ is the Whitney interval, satisfying $3I\subset \Omega_k$ and 
$5I\not\subset\Omega_k$. We now choose $z\in 5I\cap \Omega_k^c$. Then we obtain that
$|I|<|z-x|<3|I|$.
Now we claim the following:  For $z\in 5I\cap \Omega_k^c$ and for every $y$ with $(y,t)\not\in 3\hat{I}$,
there holds
\begin{align}\label{claim P1}
\mathsf{P}^{[\lz]}_t(x,y)\leq 19^{\lz+1}\mathsf{P}^{[\lz]}_t(z,y).
\end{align}
Assume this claim holds. Then we multiply it by $\phi(y,t) 1_{(3\hat{I})^c}$ and then integrate with respect
to $d\mu(y,t)$. As a consequence, we have
\begin{align*}
\mathsf{P}^{[\lz],*}_\mu(\phi\cdot 1_{(3\hat{I})^c} )(x)\leq 19^{\lz+1} \mathsf{P}^{[\lz],*}_\mu(\phi\cdot 1_{(3\hat{I})^c} )(z) \leq 19^{\lz+1}2^k,
\end{align*}
which implies that \eqref{Pmu*} holds.

It now suffices to prove this claim, we consider the following two cases.

\noindent
\textbf{Case 1: $y\not\in 3I$. } In this case we have $|y-x|>|I|$. For each $y\not\in 3I$ and for each fixed $\theta\in (0,\pi)$,
we denote $d_x^2:=x^2+y^2-2xy\cos\theta$, and $d_z^2:=z^2+y^2-2zy\cos\theta$. Then by the triangle inequality, we have $d_x>|x-y|$, which implies that
$d_x>|I|$.  Next, we obtain that 
$$ d_z<d_x+|z-x|<d_x+3|I| < 4d_x. $$
As a consequence,
\begin{align*}
\mathsf{P}^{[\lz]}_t(x,y)&=\frac{2\lz t}{\pi}\int_0^\pi\frac{(\sin\theta)^{2\lz-1}}{(x^2+y^2+t^2-2xy\cos\theta)^{\lz+1}}\,d\theta\\
&\leq 16^{\lz+1}\frac{2\lz t}{\pi}\int_0^\pi\frac{(\sin\theta)^{2\lz-1}}{(z^2+y^2+t^2-2zy\cos\theta)^{\lz+1}}\,d\theta\\
&\leq 16^{\lz+1}\mathsf{P}^{[\lz]}_t(z,y).
\end{align*}

\noindent
\textbf{Case 2: $y\in 3I$. } Since we require that $(y,t)\not\in 3\hat{I}$, in this case we have $t>|I|$.
Then from the triangle inequality, we have
$$ d_z<d_x+|z-x|<d_x+3|I| < d_x+3t, $$
which implies that $t^2+d_z^2 <19(d_x^2+t^2)$. As a consequence, we find 
\begin{align*}
\mathsf{P}^{[\lz]}_t(x,y)\leq 19^{\lz+1}\mathsf{P}^{[\lz]}_t(z,y).
\end{align*}
Combining the above two cases, we obtain that the claim \eqref{claim P1} holds.

The proof of Lemma \ref{lem2.1} is complete.
\end{proof}

Now for $I\in\mathcal{I}_k$ with $\sigma(F_k(I))\geq\delta \sigma(I) $ and for each $x\in F_k(I)$, it follows from
the maximum principle that
\begin{align*}
\mathsf{P}^{[\lz],*}_\mu(\phi\cdot 1_{3\hat{I}} )(x) &=
\mathsf{P}^{[\lz],*}_\mu(\phi )(x) -  \mathsf{P}^{[\lz],*}_\mu(\phi\cdot 1_{(3\hat{I})^c} )(x) \\
&\geq 2^{k+m} - 19^{\lz+1}2^k. 
\end{align*}
By choosing $m$ large such that $2^m>19^{\lz+1} +1$, we obtain that 
\begin{align*}
\mathsf{P}^{[\lz],*}_\mu(\phi\cdot 1_{3\hat{I}} )(x) \geq 2^k. 
\end{align*}
Hence,
\begin{align*}
2^k &\leq \frac{1}{\sigma(F_k(I))} \int _{F_k(I)} \mathsf{P}^{[\lz],*}_\mu(\phi\cdot 1_{3\hat{I}} )(x)\,d\sigma(x) =\frac{1}{\sigma(F_k(I))}  \int_{3\hat{I}} \mathsf{P}^{[\lz]}_\sigma( 1_{F_k(I)} )(x,t) \phi(x,t) \,d\mu(x,t)\\
&=\frac{1}{\sigma(F_k(I))}  \int_{3\hat{I}\backslash \hat{\Omega}_{k+m+1} } \mathsf{P}^{[\lz]}_\sigma( 1_{F_k(I)} )(x,t) \phi(x,t) \,d\mu(x,t)\\
&\quad+\frac{1}{\sigma(F_k(I))}  \int_{3\hat{I} \cap  \hat{\Omega}_{k+m+1}} \mathsf{P}^{[\lz]}_\sigma( 1_{F_k(I)} )(x,t) \phi(x,t) \,d\mu(x,t)\\
&=: B_1(k,I)+B_2(k,I).
\end{align*}
Then we obtain that 
\begin{align*}
B&\leq 2\sum_{k\in\mathbb{Z}}  \sum_{\substack{I\in\mathcal{I}_k\\ \sigma(F_k(I))\geq\delta \sigma(I) }}
B_1(k,I)^2\sigma(F_k(I)) +2\sum_{k\in\mathbb{Z}}  \sum_{\substack{I\in\mathcal{I}_k\\ \sigma(F_k(I))\geq\delta \sigma(I) }}
B_2(k,I)^2\sigma(F_k(I))
=:B_1+B_2.
\end{align*}
To prove \eqref{B}, we seek to prove that:
%\begin{equation*}
%B=B_1+B_2\lesssim \left(\mathcal{F}^2+\mathcal{B}^2\right)\left\Vert \phi\right\Vert_{L^2(\mathbb{R}_{+,+}^2;\mu)}^2.
%\end{equation*}
%And this will be accomplished by showing:
\begin{eqnarray}
B_1 & \lesssim & \delta^{-2}\mathcal{F}^2\left\Vert \phi\right\Vert_{L^2(\mathbb{R}_{+,+}^2;\mu)}^2; \label{e:B1estimate}\\
B_2 & \lesssim & \delta^{-2}\left(\mathcal{F}^2+\mathcal{B}^2\right)\left\Vert \phi\right\Vert_{L^2(\mathbb{R}_{+,+}^2;\mu)}^2; \label{e:B2estimate}
\end{eqnarray}

We now consider the term $B_1$. As for $B_1(k,I)$, by noting that $\sigma(I)\geq \sigma(F_k(I))\geq\delta \sigma(I) $ and that the Poisson operator $\mathsf{P}^{[\lambda]}_{t}$ is a positive operator,
 we have
 \begin{align*}
B_1(k,I)&\leq \delta^{-1}\frac{1}{\sigma(I)}  \int_{3\hat{I}\backslash \hat{\Omega}_{k+m+1} } \mathsf{P}^{[\lz]}_\sigma( 1_{I} )(x,t) \phi(x,t) \,d\mu(x,t)\\
&\leq  \delta^{-1}\frac{1}{\sigma(I)} \left(\int_{3\hat{I}\backslash \hat{\Omega}_{k+m+1} } |\mathsf{P}^{[\lz]}_\sigma( 1_{I} )(x,t)|^2  \,d\mu(x,t)\right)^{\frac{1}{2}}\left(\int_{3\hat{I}\backslash \hat{\Omega}_{k+m+1} } |\phi(x,t)|^2  \,d\mu(x,t)\right)^{\frac{1}{2}}  \\
&\leq \delta^{-1}\mathcal{F} \frac{1}{\sigma(I)^{\frac{1}{2}}} \left(\int_{3\hat{I}\backslash \hat{\Omega}_{k+m+1} } |\phi(x,t)|^2  \,d\mu(x,t)\right)^{\frac{1}{2}},
\end{align*}
where the last inequality follows from the forward testing condition for $\mathsf{P}^{[\lambda]}_{t}$. Hence,
\begin{align*}
B_1&\leq 2\delta^{-2}\mathcal{F}^2\sum_{k\in\mathbb{Z}}  \sum_{\substack{I\in\mathcal{I}_k\\ \sigma(F_k(I))\geq\delta \sigma(I) }}
 \frac{1}{\sigma(I)} \int_{3\hat{I}\backslash \hat{\Omega}_{k+m+1} } |\phi(x,t)|^2  \,d\mu(x,t)\sigma(F_k(I))\\
&\leq 2\delta^{-2}\mathcal{F}^2
  \int_{\mathbb{R}^2_{+,+}} |\phi(x,t)|^2 \sum_{k\in\mathbb{Z}}  \sum_{\substack{I\in\mathcal{I}_k\\ \sigma(F_k(I))\geq\delta \sigma(I) }} 
  1_{3\hat{I}\backslash \hat{\Omega}_{k+m+1}}(x,t)  \,d\mu(x,t)\\
&\lesssim \delta^{-2}\mathcal{F}^2
  \int_{\mathbb{R}^2_{+,+}} |\phi(x,t)|^2   \,d\mu(x,t),
\end{align*}
where the last inequality follows from the fact that
$$ \left\|\sum_{k\in\mathbb{Z}}  \sum_{\substack{I\in\mathcal{I}_k\\ \sigma(F_k(I))\geq\delta \sigma(I) }} 
  1_{3\hat{I}\backslash \hat{\Omega}_{k+m+1}}(x,t)\right\|_{\infty} \lesssim 1, $$
which is a consequence of the bounded overlaps of the Whitney cubes.  Thus, we have that  $B_1\lesssim \delta^{-2}\mathcal{F}^2 \left\Vert \phi\right\Vert_{L^2(\mathbb{R}_{+,+}^2;\mu)}^2$, which shows that  \eqref{e:B1estimate} holds.

We now estimate $B_2$, which is bounded by
\begin{align}\label{B2 e1}
B_2&\leq 2\delta^{-1}\sum_{k\in\mathbb{Z}}  \sum_{\substack{I\in\mathcal{I}_k\\ \sigma(F_k(I))\geq\delta \sigma(I) }}
\frac{1}{\sigma(I)}  \left(\int_{3\hat{I} \cap  \hat{\Omega}_{k+m+1}} \mathsf{P}^{[\lz]}_\sigma( 1_{F_k(I)} )(x,t) \phi(x,t) \,d\mu(x,t) \right)^2. 
\end{align}
To continue, we decompose 
\begin{align}\label{B2 decom}
 3\hat{I}\cap  \hat{\Omega}_{k+m+1}=\bigcup_{J} \{ \hat{J}: J\subset 3I, J\in \mathcal{I}_{k+m+1}  \}.
\end{align}
Note that for such $J$, $3J\cap F_k(I)\not=\emptyset$. 
We now claim that or $(x,t)\in\hat{J}$,
\begin{align}\label{claim P}
\mathsf{P}^{[\lambda]}_\sigma(1_{F_k(I)})(x,t) \approx \frac{t}{|J|}\mathsf{P}^{[\lambda]}_\sigma(1_{F_k(I)})(x_J,|J|), 
\end{align}
where the implicit constants are independent of $x$, $t$ and $I$.

In fact,  for $(x,t)\in\hat{J}$ and $y\in F_k(I)$, we have $|x-y|>|J|>t$. Moreover, for such $x$ and $y$ and for $\theta\in(0,\pi)$, we have
$d=\sqrt{x^2+y^2-2xy\cos\theta} >|x-y|$, which yields that $d>|J|>t$. As a consequence, we have
\begin{align*}
\mathsf{P}^{[\lambda]}_\sigma(1_{F_k(I)})(x,t)&= \frac{t}{|J|} \int_{F_k(I)} \frac{2\lz |J|}{\pi}\int_0^\pi\frac{(\sin\theta)^{2\lz-1}}{(x^2+y^2+t^2-2xy\cos\theta)^{\lz+1}}\,d\theta \,d\sigma(y)\\
& \approx \frac{t}{|J|} \int_{F_k(I)} \frac{2\lz |J|}{\pi}\int_0^\pi\frac{(\sin\theta)^{2\lz-1}}{(x^2+y^2+|J|^2-2xy\cos\theta)^{\lz+1}}\,d\theta \,d\sigma(y)\\
& =\frac{t}{|J|}\mathsf{P}^{[\lambda]}_\sigma(1_{F_k(I)})(x,|J|).
\end{align*}
Next we denote $d_J=\sqrt{x_J^2+y^2-2x_Jy\cos\theta}$. Note that $|x-x_J|<|J|/2$. We have
$$  d-|x-x_J| < d_J < d_x+|x-x_J|, $$
which implies that $ d/2 <d_J < 3d_J/2 $. Hence, we get
\begin{align*}
& \int_{F_k(I)} \frac{2\lz |J|}{\pi}\int_0^\pi\frac{(\sin\theta)^{2\lz-1}}{(x^2+y^2+|J|^2-2xy\cos\theta)^{\lz+1}}\,d\theta \,d\sigma(y)\\
&\approx \int_{F_k(I)} \frac{2\lz |J|}{\pi}\int_0^\pi\frac{(\sin\theta)^{2\lz-1}}{(x_J^2+y^2+|J|^2-2x_Jy\cos\theta)^{\lz+1}}\,d\theta \,d\sigma(y),
\end{align*}
which implies that 
$$ \mathsf{P}^{[\lambda]}_\sigma(1_{F_k(I)})(x,|J|)\approx \mathsf{P}^{[\lambda]}_\sigma(1_{F_k(I)})(x_J,|J|). $$
Thus, the claim \eqref{claim P} holds.

From \eqref{claim P} we obtain that
\begin{align}
&\int_{\hat{J}}\mathsf{P}^{[\lambda]}_\sigma(1_{F_k(I)})(x,t)\phi(x,t)\,d\mu(x,t)\nonumber \\
&\approx  
\mathsf{P}^{[\lambda]}_\sigma(1_{F_k(I)})(x_J,|J|) \int_{\hat{J}} \frac{t}{|J|} \phi(x,t)\,d\mu(x,t)\nonumber \\
&\approx  
\mathsf{P}^{[\lambda]}_\sigma(1_{F_k(I)})(x_J,|J|) \int_{\hat{J}} \frac{1}{t|J|} \phi(x,t)\,d\tilde{\mu}(x,t) \nonumber\\
&\approx  
\int_{\hat{J}}  \mathsf{P}^{[\lambda]}_\sigma(1_{F_k(I)})(x,t)  \,d\tilde{\mu}(x,t) \cdot \frac{1}{\tilde{\mu}(\hat{J})} \cdot \frac{1}{|J|}\int_{\hat{J}} \frac{1}{t} \phi(x,t)\,d\tilde{\mu}(x,t) \nonumber\\
&\lesssim \int_{\hat{J}}  \mathsf{P}^{[\lambda]}_\sigma(1_{I})(x,t) \frac{1}{t} d\tilde{\mu}(x,t) \cdot \frac{1}{\tilde{\mu}(\hat{J})} \cdot\int_{\hat{J}} \frac{1}{t} \phi(x,t)\,d\tilde{\mu}(x,t), \label{B2 e2}
\end{align}
where the last inequality follows from the fact that $\mathsf{P}^{[\lambda]}_t$ is a positive operator, and $d\tilde{\mu}(x,t) =
t^2\,d\mu(x,t)$.

From \eqref{B2 e1}, the decomposition \eqref{B2 decom} and the inequality \eqref{B2 e2}, we get that
\begin{align*}
B_2&\leq C\delta^{-1}\sum_{k\in\mathbb{Z}}  \sum_{\substack{I\in\mathcal{I}_k\\ \sigma(F_k(I))\geq\delta \sigma(I) }}\frac{1}{\sigma(I)}  \left(\sum_{\substack{ J\in \mathcal{I}_{k+m+1}\\ J\subset 3I }} \int_{\hat{J}}  \mathsf{P}^{[\lambda]}_\sigma(1_{I})(x,t) \,\frac{d\tilde{\mu}(x,t)}{t}  \cdot \frac{\int_{\hat{J}} \frac{1}{t} \phi(x,t)d\tilde{\mu}(x,t)}{\tilde{\mu}(\hat{J})}  \right)^2. 
\end{align*}
We now define $$\alpha(J)= \frac{1}{\tilde{\mu}(\hat{J})} \int_{\hat{J}} \frac{1}{t} \phi(x,t)d\tilde{\mu}(x,t)$$ for every interval $J \subset \mathbb{R}_+$. Since $\phi\in L^2(\mathbb{R}^2_{+,+};\mu)$, we have
\begin{align*}
\alpha(J)&\leq \left( \frac{1}{\tilde{\mu}(\hat{J})} \int_{\hat{J}} \Big|\frac{1}{t} \phi(x,t)\Big|^2d\tilde{\mu}(x,t)\right)^{\frac{1}{2}}\\
&\leq \left( \frac{1}{\tilde{\mu}(\hat{J})} \int_{\mathbb{R}^2_{+,+}} \Big| \phi(x,t)\Big|^2\,d\mu(x,t)\right)^{\frac{1}{2}}\\
&=\frac{1}{\tilde{\mu}(\hat{J})^{\frac{1}{2}}} \|\phi\|_{L^2(\mathbb{R}^2_{+,+};\mu)}. 
\end{align*}
Hence $\alpha(J)$ is well-defined for each $J$.

We now define the set $\mathcal{G}$ of principal intervals as follows. Initialize $\mathcal{G}$
to be $I_0$, which is the large dyadic interval that $\sigma$ is supported on.  Next, consider the children $J$ of 
$I_0$.  If $\alpha(J) \geq 10 \alpha(I_0)$, then add $J$ to $\mathcal{G}$. If $\alpha(J) < 10 \alpha(I_0)$, then we continue to look at the children of this $J$. Then the set $\mathcal{G}$ is defined via induction.

Next we consider the maximal function 
$$M_{\tilde{\mu}}\psi(x,t) := \sup_{J\in \mathcal{D},\ (x,t)\in \hat{J}} \frac{1}{\tilde{\mu}(\hat{J})} \int_{\hat{J}}  |\psi(y,s)|d\tilde{\mu}(y,s).  $$
\begin{align}\label{claim max}
{\rm We\ {\bf claim}\ that}:  M_{\tilde{\mu}}\psi(x,t)\ {\rm\ is\ bounded\ on\ }\ L^2(\mathbb{R}_{+,+}^2;\tilde{\mu}).   
\end{align}

In fact, it is easy to see that the maximal function $M_{\tilde{\mu}}\psi(x,t)$
is bounded on $L^\infty(\mathbb R^2_{+,+})$. Thus, it suffices to show that 
it is also weak type $(1,1)$.

To see this, let $0\leq \psi\in L^1(\mathbb R^2_{+,+})$ and $\alpha>0$. Consider the level set
$$ S_\alpha:=\{ (x,t)\in\mathbb R_{+,+}^2:\ M_{\tilde{\mu}}\psi(x,t) >\alpha \}, $$
which is the union of the maximal dyadic cubes $\hat J = J \times [0,|J|]$ in $\mathbb R^2_{+,+}$ with some $J\in\mathcal D$ such that
$$ \int_{\hat{J}}  |\psi(y,s)|d\tilde{\mu}(y,s) > \alpha  \tilde{\mu}(\hat{J})>0.  $$
Here, the argument $\hat J $ is maximal means that if there is a $J_1\in\mathcal D$ with $J\subsetneq J_1$, then 
$$ \int_{\hat{J}_1}  |\psi(y,s)|d\tilde{\mu}(y,s) \leq \alpha \tilde{\mu}(\hat{J}_1).  $$
And we point out that such maximal dyadic cubes always exist. In fact, suppose 
there is $(x,t)\in S_\alpha$ such that there is no maximal dyadic cube in those dyadic cubes that contain $(x,t)$.
There we have a sequence of increasing nested dyadic cubes $\hat J_k$ containing $(x,t)$ such that 
$  \tilde{\mu}(\hat{J}_k) \to\infty$ as $k\to\infty$ with 
$$ \int_{\hat{J}_k}  |\psi(y,s)|d\tilde{\mu}(y,s) > \alpha  \tilde{\mu}(\hat{J}_k).  $$
However, this leads to contradiction since $$\int_{\hat{J}_k}  |\psi(y,s)|d\tilde{\mu}(y,s) \to \|\psi\|_{L^1(\mathbb R^2_{+,+})}.$$

Thus, we have a sequence of disjoint dyadic maximal cubes  $\{\hat J_k\}_k$ such that
$$ S_\alpha\subset \bigcup_k \hat J_k. $$
This sequence  $\{\hat J_k\}_k$ can be finite or infinite.
In both cases, we have
\begin{align}
\sum_k \tilde{\mu}(\hat{J}_k)\leq {1\over \alpha} \sum_k  \int_{\hat{J}_k}  |\psi(y,s)|d\tilde{\mu}(y,s) 
\leq {1\over \alpha}  \int_{\mathbb R^2_{+,+}}  |\psi(y,s)|d\tilde{\mu}(y,s) <\infty.
\end{align}
So, if the selection gives an infinite sequence then $\tilde{\mu}(\hat{J}_k)\to 0$ as $k\to\infty$.

As a consequence, we obtain that
$$ \tilde{\mu}(S_\alpha)\leq  \sum_k \tilde{\mu}(\hat{J}_k) \leq {1\over \alpha}\|\psi\|_{L^1(\mathbb R^2_{+,+})},$$
which implies that $M_{\tilde{\mu}}$ is weak type $(1,1)$, and hence \eqref{claim max} holds.

%\newpage

\smallskip
Now from the  $L^2(\mathbb{R}_{+,+}^2;\tilde{\mu})$ bound of $M_{\tilde{\mu}}$, we have
\begin{align}\label{maximal function}
\sum_{I\in\mathcal{G}} \alpha(I)^2 \tilde{\mu}(\hat{I}) &\leq  \sum_{I\in\mathcal{G}} \left( \inf_{(x,t)\in \hat{I}} M_{\tilde{\mu}}(\tilde{\phi})(x,t)\right)^2 \tilde{\mu}(\hat{I}) \\
&\leq \int_{3\hat{I}_0} M_{\tilde{\mu}}(\tilde{\phi})(x,t)^2 \,d\tilde{\mu}(x,t)\nonumber\\
&\lesssim \int_{3\hat{I}_0} \tilde{\phi}(x,t)^2 \,d\tilde{\mu}(x,t)\nonumber\\
&\leq  \|\phi\|_{L^2(\mathbb{R}^2_{+,+},\,d\mu)}^2, \nonumber
\end{align}
where $\tilde{\phi}(x,t) = t^{-1} \phi(x,t)$.
\bigskip

Next, in the sum over $\mathcal{I}_{k+m+1}$, we denote $I_{-1}=I-|I|$, $I_0=I$ and $I_1=I+|I|$. The union of these three intervals is $3I$. This notation,together with the definition of $\mathcal{G}$,  gives
\begin{align*}
B_2&\lesssim \delta^{-1}\sum_{k\in\mathbb{Z}}  \sum_{\substack{I\in\mathcal{I}_k\\ \sigma(F_k(I))\geq\delta \sigma(I) }}
\frac{1}{\sigma(I)} \\
&\quad\quad\times \left(\sum_{\theta=-1}^1 \sum_{\substack{ J\in \mathcal{I}_{k+m+1}\\ J\subset I_\theta, \pi_\mathcal{G}J=\pi_\mathcal{G} I_\theta}} \int_{\hat{J}}  \mathsf{P}^{[\lambda]}_\sigma(1_{I})(x,t) \,\frac{d\tilde{\mu}(x,t)}{t}  \cdot \frac{\int_{\hat{J}} \frac{1}{t} \phi(x,t)d\tilde{\mu}(x,t)}{\tilde{\mu}(\hat{J})}  \right)^2\\
&\quad\quad \quad+\delta^{-1}\sum_{k\in\mathbb{Z}}  \sum_{\substack{I\in\mathcal{I}_k\\ \sigma(F_k(I))\geq\delta \sigma(I) }}
\frac{1}{\sigma(I)} \\
&\quad\quad\quad\quad\times \left(\sum_{\theta=-1}^1 \sum_{\substack{ J\in \mathcal{I}_{k+m+1}\\ J\subset I_\theta, \pi_\mathcal{G}J\subsetneq\pi_\mathcal{G} I_\theta}} \int_{\hat{J}}  \mathsf{P}^{[\lambda]}_\sigma(1_{I})(x,t) \frac{1}{t} \,d\tilde{\mu}(x,t) \cdot \frac{\int_{\hat{J}} \frac{1}{t} \phi(x,t)\,d\tilde{\mu}(x,t) }{\tilde{\mu}(\hat{J})} \right)^2 \\
&=: B_{21}+B_{22}.
\end{align*}
Thus, to prove \eqref{e:B2estimate} it will suffice to provide an estimate of the right form on each of $B_{21}$ and $B_{22}$.  We will show that:
\begin{eqnarray}
B_{21} & \lesssim & \delta^{-2} \mathcal{B}^2  \|\phi\|_{L^2(\mathbb{R}^2_{+,+};\mu)}^2;\label{e:B21estimate}\\
B_{22} & \lesssim & \delta^{-1}\mathcal{F}^2\|\phi\|_{L^2(\mathbb{R}^2_{+,+};\mu)}^2.\label{e:B22estimate}
\end{eqnarray}

For $B_{21}$, using the definition of $\alpha(J)$, we have
\begin{align*}
B_{21}&\lesssim \delta^{-1}\sum_{\theta=-1}^1\sum_{k\in\mathbb{Z}}  \sum_{\substack{I\in\mathcal{I}_k\\ \sigma(F_k(I))\geq\delta \sigma(I) }}
\frac{1}{\sigma(I)}  \alpha(J)^2\left( \sum_{\substack{ J\in \mathcal{I}_{k+m+1}\\ J\subset I_\theta, \pi_\mathcal{G}J=\pi_\mathcal{G} I_\theta}} \int_{\hat{J}}  \mathsf{P}^{[\lambda]}_\sigma(1_{I})(x,t) \,\frac{d\tilde{\mu}(x,t)}{t}   \right)^2\\
&\lesssim \delta^{-1}\sum_{\theta=-1}^1\sum_{k\in\mathbb{Z}}  \sum_{\substack{I\in\mathcal{I}_k\\ \sigma(F_k(I))\geq\delta \sigma(I) }}
\frac{1}{\sigma(I)}  \alpha(\pi_{\mathcal{G}}I_\theta)^2\left( \sum_{\substack{ J\in \mathcal{I}_{k+m+1}\\ J\subset I_\theta, \pi_\mathcal{G}J=\pi_\mathcal{G} I_\theta}} \int_{\hat{J}}  \mathsf{P}^{[\lambda]}_\sigma(1_{I})(x,t) t \,d\mu(x,t)  \right)^2\\
&\lesssim \delta^{-1}\sum_{\theta=-1}^1\sum_{k\in\mathbb{Z}}  \sum_{\substack{I\in\mathcal{I}_k\\ \sigma(F_k(I))\geq\delta \sigma(I) }}
\frac{1}{\sigma(I)}  \alpha(\pi_{\mathcal{G}}I_\theta)^2\left( \int_{I}  \mathsf{P}^{[\lambda],*}_\mu(t 1_{\hat{I_\theta}})(y)  \,d\sigma(y)  \right)^2\\
&\lesssim \delta^{-1}\sum_{\theta=-1}^1\sum_{k\in\mathbb{Z}}  \sum_{\substack{I\in\mathcal{I}_k\\ \sigma(F_k(I))\geq\delta \sigma(I) }}
\frac{1}{\sigma(I)}  \alpha(\pi_{\mathcal{G}}I_\theta)^2\, \sigma(I)   \int_{3I}  \mathsf{P}^{[\lambda],*}_\mu(t 1_{\hat{I_\theta}})(y)^2  \,d\sigma(y)    \\
&\lesssim\delta^{-1}\mathcal{B}^2\sum_{\theta=-1}^1\sum_{k\in\mathbb{Z}}  \sum_{\substack{I\in\mathcal{I}_k\\ \sigma(F_k(I))\geq\delta \sigma(I) }}
  \alpha(\pi_{\mathcal{G}}I_\theta)^2\,    \tilde{\mu}(\hat{I}_\theta)   \\
&= \delta^{-1}\mathcal{B}^2\sum_{\theta=-1}^1  \sum_{G\in\mathcal{G}}  \alpha(G)^2  \sum_{k\in\mathbb{Z}}  \sum_{\substack{I\in\mathcal{I}_k\\ \sigma(F_k(I))\geq\delta \sigma(I)\\  \pi_{\mathcal{G}} I_\theta =G}}
  \tilde{\mu}(\hat{I}_\theta),
\end{align*}
where the last inequality follows from the testing condition for $\mathsf{P}^{[\lambda],*}_{t}$.

We point out that for each dyadic interval $I$, the set
$$ \left\{k\in\mathbb{Z}:\ I\in\mathcal{I}_k, \sigma(F_k(I)) \geq \delta \sigma(I)\right\} $$
consists of at most $\delta^{-1}$ consecutive integers. Actually, that the integers
in this set are consecutive follows from the nested property of the collections $\mathcal{I}_k$.
Moreover, note that for each fixed $I$, the sets $F_k(I)\subset I$ are pairwise disjoint (with respect to $k$), and 
for each $k$,  $\sigma(F_k(I)) \geq \delta \sigma(I)$. Hence, there are at most $\delta^{-1}$ such integers $k$.

As a consequence, we obtain that
\begin{align*}
B_{21}&\leq C\delta^{-2} \mathcal{B}^2\sum_{G\in\mathcal{G}}  \alpha(G)^2  
  \tilde{\mu}(\hat{G})\leq C\delta^{-2} \mathcal{B}^2  \|\phi\|_{L^2(\mathbb{R}^2_{+,+};\mu)}^2,
\end{align*}
where the last inequality follows from the maximal inequality \eqref{maximal function}.  This gives \eqref{e:B21estimate}.

We now turn to the estimate $B_{22}$. Using the definition of $\alpha(J)$, we have
\begin{align*}
B_{22}&\lesssim\delta^{-1}\sum_{\theta=-1}^1 \sum_{k\in\mathbb{Z}}  \sum_{\substack{I\in\mathcal{I}_k\\ \sigma(F_k(I))\geq\delta \sigma(I) }}
\frac{1}{\sigma(I)}  \left(\sum_{\substack{ J\in \mathcal{I}_{k+m+1}\\ J\subset I_\theta\\ \pi_\mathcal{G}J\subsetneq\pi_\mathcal{G} I_\theta}} \int_{\hat{J}}  \mathsf{P}^{[\lambda]}_\sigma(1_{I})(x,t) \,\frac{d\tilde{\mu}(x,t)}{t}  \cdot \alpha(J) \right)^2 \\
&\lesssim\delta^{-1}\sum_{\theta=-1}^1 \sum_{k\in\mathbb{Z}}  \sum_{\substack{I\in\mathcal{I}_k\\ \sigma(F_k(I))\geq\delta \sigma(I) }}
\frac{1}{\sigma(I)}  \sum_{\substack{ J\in \mathcal{I}_{k+m+1}\\ J\subset I_\theta\\ \pi_\mathcal{G}J\subsetneq\pi_\mathcal{G} I_\theta}} \left[\int_{\hat{J}}  \mathsf{P}^{[\lambda]}_\sigma(1_{I})(x,t)\, \frac{d\tilde{\mu}(x,t)}{t} \right]^{2}\tilde{\mu}(\hat{J})^{-1}  \\
&\quad\quad\times  \sum_{\substack{ J\in \mathcal{I}_{k+m+1}\\ J\subset I_\theta\\ \pi_\mathcal{G}J\subsetneq\pi_\mathcal{G} I_\theta}} \tilde{\mu}(\hat{J}) \alpha(J)^2,
\end{align*}
where the last inequality follows from Cauchy--Schwarz inequality. Next, from the Cauchy--Schwarz inequality, the definition of $\tilde{\mu}$ and the testing condition, we have
\begin{align*}
  \sum_{\substack{ J\in \mathcal{I}_{k+m+1}\\ J\subset I_\theta\\ \pi_\mathcal{G}J\subsetneq\pi_\mathcal{G} I_\theta}} \left[\int_{\hat{J}}  \mathsf{P}^{[\lambda]}_\sigma(1_{I})(x,t) \,\frac{d\tilde{\mu}(x,t)}{t} \right]^{2}\tilde{\mu}(\hat{J})^{-1}   & \leq \sum_{\substack{ J\in \mathcal{I}_{k+m+1}\\ J\subset I_\theta\\ \pi_\mathcal{G}J\subsetneq\pi_\mathcal{G} I_\theta}} \int_{\hat{J}}  \mathsf{P}^{[\lambda]}_\sigma(1_{I})(x,t)^2  \,d\mu(x,t)\\
& \leq \mathcal{F}^2 \sigma(I),
\end{align*}
which implies that
\begin{align}
B_{22}
&\lesssim\mathcal{F}^2\delta^{-1}\sum_{\theta=-1}^1 \sum_{k\in\mathbb{Z}}  \sum_{\substack{I\in\mathcal{I}_k\\ \sigma(F_k(I))\geq\delta \sigma(I) }}
\frac{1}{\sigma(I)}   \sigma(I)  \sum_{\substack{ J\in \mathcal{I}_{k+m+1}\\ J\subset I_\theta\\ \pi_\mathcal{G}J\subsetneq\pi_\mathcal{G} I_\theta}} \tilde{\mu}(\hat{J}) \alpha(J)^2\nonumber\\
&\lesssim\mathcal{F}^2\delta^{-1} \sum_{k\in\mathbb{Z}}  \sum_{\substack{I\in\mathcal{I}_k\\ \sigma(F_k(I))\geq\delta \sigma(I) }}
\sum_{\substack{ J\in \mathcal{I}_{k+m+1}\\ J\subset I_\theta, \pi_\mathcal{G}J\subsetneq\pi_\mathcal{G} I_\theta}} \tilde{\mu}(\hat{J}) \alpha(\pi_\mathcal{G}J)^2.\label{B22}
\end{align}

We now recall a technical result from Lacey \cite{L}*{Lemma 8.15} as follows.
\begin{lem}[\cite{L}]\label{L}
There is an absolute constant $C$ such that for any $G\in\mathcal{G}$, the cardinality of the set
$$ \left\{k:\ \pi_{\mathcal{G}}J = G,\ J\in\mathcal{I}_{k+m+1}\ {\rm contributes\ to\ the\ } k{\rm th\ sum\ in\ } \eqref{B22} \right\} $$
is at most $C$.
\end{lem}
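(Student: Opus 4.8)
The plan is to fix a single principal interval $G\in\mathcal{G}$ and to show that the set of scales in the statement consists of at most $C$ consecutive integers, exploiting simultaneously the nesting and bounded-overlap properties (1)--(4) of the Whitney families $\{\mathcal{I}_k\}$, the monotonicity $\Omega_{k'}\subset\Omega_k$ for $k<k'$, and the stopping rule defining $\mathcal{G}$. For each contributing scale $k$ I would fix a witness triple $(I^{(k)},\theta_k,J^{(k)})$ with $I^{(k)}\in\mathcal{I}_k$, $\sigma(F_k(I^{(k)}))\ge\delta\,\sigma(I^{(k)})$, $J^{(k)}\in\mathcal{I}_{k+m+1}$, $J^{(k)}\subset I^{(k)}_{\theta_k}\subset 3I^{(k)}$, $\pi_{\mathcal{G}}J^{(k)}=G$ and $\pi_{\mathcal{G}}I^{(k)}_{\theta_k}\supsetneq G$. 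From $\pi_{\mathcal{G}}J^{(k)}=G$ one reads off $J^{(k)}\subset G$; from $\pi_{\mathcal{G}}I^{(k)}_{\theta_k}\supsetneq G$, that $I^{(k)}_{\theta_k}\not\subset G$; and from Whitney property (3), that $|J^{(k)}|\le|I^{(k)}|$. Since $|I^{(k)}_{\theta_k}|=|I^{(k)}|$ and $I^{(k)},G$ are dyadically nested, the contributing scales split into the \emph{boundary} ones ($|I^{(k)}|<|G|$, with $I^{(k)}_{\theta_k}$ straddling an endpoint of $G$) and the \emph{large} ones ($G\subsetneq I^{(k)}$, which forces $\theta_k=0$, since then $J^{(k)}\subset G\subsetneq I^{(k)}_0$ cannot sit inside the disjoint siblings $I^{(k)}_{\pm1}$).

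The boundary scales should be dispatched quickly: since $I^{(k)}_{\theta_k}$ straddles a fixed endpoint of $G$ (two choices, pigeonhole) while $\pi_{\mathcal{G}}I^{(k)}_{\theta_k}\supsetneq G$, the smallest dyadic interval containing $I^{(k)}_{\theta_k}$ is already a proper dyadic ancestor of $G$, which is impossible once $|I^{(k)}|\ll|G|$; so $|I^{(k)}|\approx|G|$, confining $I^{(k)}$ to a fixed bounded dilate of $G$, and then bounded overlap of $\{3I^{(k)}\}$ (property (4)) together with $\Omega_{k+1}\subset\Omega_k$ and $5I^{(k)}\not\subset\Omega_k$ leave only $O(1)$ admissible $k$.

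The large scales are the crux. First I would show that if $k<k'$ are both large then $I^{(k')}\subseteq I^{(k)}$: the two intervals are dyadically nested because each contains $G$, and $I^{(k)}\subsetneq I^{(k')}$ is ruled out by Whitney property (3), which in that case would force $2^k>2^{k'}$. Thus over the large scales the intervals $I^{(k)}$ form a tower decreasing in $k$, and for each fixed interval $I_\ast$ occurring in the tower the scales $k$ with $I^{(k)}=I_\ast$ number at most $\delta^{-1}$, since the sets $F_k(I_\ast)=I_\ast\cap(\Omega_{k+m}\setminus\Omega_{k+m+1})$ are pairwise disjoint subsets of $I_\ast$, each of $\sigma$-mass $\ge\delta\,\sigma(I_\ast)$. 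What remains, and what I expect to be the main obstacle, is to bound the number of \emph{distinct} intervals in this tower by an absolute constant, equivalently to prove $|I^{(k)}|\le C|G|$ for every large scale $k$; this cannot be read off from the Whitney geometry or the $\sigma$-mass budget alone, and one has to bring in the stopping structure of $\mathcal{G}$ — the inequalities $\alpha(\pi_{\mathcal{G}}I^{(k)})\le\tfrac{1}{10}\,\alpha(G)$ forced by $G\subsetneq\pi_{\mathcal{G}}I^{(k)}$, which control a $\mu$-average of $t^{-1}\phi$ over $\hat{G'}$ with $G'=\pi_{\mathcal{G}}I^{(k)}$ against the same average over $\hat{G}$ — and to play this against the maximum principle of Lemma~\ref{lem2.1} and the size information $\sigma(F_k(I^{(k)}))\ge\delta\,\sigma(I^{(k)})$, exactly along the lines of \cite{L}*{Lemma~8.15}. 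Combining the tower of length $O(1)$, multiplicity at most $\delta^{-1}$ per level, and $O(1)$ boundary scales then yields the asserted bound on the cardinality.
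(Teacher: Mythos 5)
There is a genuine gap, and you name it yourself: after setting up the witness triples and the ``tower'' of large scales, you state that bounding the number of distinct intervals in the tower ``cannot be read off from the Whitney geometry or the $\sigma$-mass budget alone'' and must be done ``exactly along the lines of \cite{L}*{Lemma 8.15}.'' But that deferred step \emph{is} the content of Lemma \ref{L}; invoking it there makes the argument circular, so what you have is a partial reduction, not a proof. (For comparison, the paper itself gives no proof either -- it imports the statement verbatim from Lacey's primer -- so there is no ``easy'' argument in the text you could be reproducing.) Note also that, even granting the deferred step, your assembly gives a bound of order $\delta^{-1}$ (up to $\delta^{-1}$ repetitions of each tower interval, by the disjointness of the sets $F_k(I)$), whereas the lemma asserts an absolute constant $C$; in the paper the $\delta^{-1}$ loss from repeated Whitney intervals is spent in the $B_{21}$ estimate, not inside Lemma \ref{L}, so a correct proof of the lemma cannot run through the per-interval multiplicity count.

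The geometric reduction also needs repair before the crux is even reached. Since $I$ is dyadic, the translates $I_{\pm 1}=I\pm|I|$ are again dyadic intervals of the same generation, and $G$, $J$ are dyadic with $J\subset G\cap I_{\theta}$. The hypothesis $\pi_{\mathcal{G}}I_{\theta}\supsetneq G$ gives $I_{\theta}\not\subset G$, and dyadic nestedness then forces $G\subsetneq I_{\theta}$; consequently the ``straddling/boundary'' case in your dichotomy never occurs, and $|I^{(k)}|>|G|$ always. More importantly, when $\theta_k=\pm1$ the interval $G$ lies inside a neighbor of $I^{(k)}$ and is \emph{disjoint} from $I^{(k)}$, so neither ``$G\subsetneq I^{(k)}$'' nor the claimed dyadic nestedness of the $I^{(k)}$ (your tower) holds; the objects that are genuinely totally ordered are the dyadic ancestors $I^{(k)}_{\theta_k}$ of $G$, and the case analysis would have to be rebuilt around them. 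Even after that repair, the essential question -- why only boundedly many scales $k$ can contribute to the sum in \eqref{B22} for a fixed $G$ -- remains exactly the step you postponed, so the proposal does not establish the lemma.
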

As a consequence of Lemma \ref{L}, we get that 
\begin{align*}
B_{22}
&\lesssim \mathcal{F}^2\delta^{-1} \sum_{I\in \mathcal{G}} \tilde{\mu}(\hat{I}) \alpha(I)^2 \lesssim\mathcal{F}^2\delta^{-1}  \|\phi\|_{L^2(\mathbb{R}^2_{+,+};\mu)}^2,
\end{align*}
which gives \eqref{e:B22estimate}.

Thus, both \eqref{e:B21estimate} and \eqref{e:B22estimate} implies that \eqref{e:B2estimate} holds, which together with \eqref{e:B1estimate}, shows that \eqref{B} holds. Hence, \eqref{two weight p} holds, which shows that Theorem \ref{main theorem} holds.

\begin{bibdiv}
\begin{biblist}
\normalsize

\bib{ak}{article}{
   author={Andersen, K. F.},
   author={Kerman, R. A.},
   title={Weighted norm inequalities for generalized Hankel conjugate
   transformations},
   journal={Studia Math.},
   volume={71},
   date={1981/82},
   number={1},
   pages={15--26}
}

\bib{bcfr}{article}{
   author={Betancor, J. J.},
   author={Chicco Ruiz, A.},
   author={Fari{\~n}a, J. C.},
   author={Rodr{\'{\i}}guez-Mesa, L.},
   title={Maximal operators, Riesz transforms and Littlewood-Paley functions
   associated with Bessel operators on BMO},
   journal={J. Math. Anal. Appl.},
   volume={363},
   date={2010},
   number={1},
   pages={310--326}
}
		
\bib{bdt}{article}{
   author={Betancor, J. J.},
   author={Dziuba{\'n}ski, J.},
   author={Torrea, J. L.},
   title={On Hardy spaces associated with Bessel operators},
   journal={J. Anal. Math.},
   volume={107},
   date={2009},
   pages={195--219}
}

\bib{bfs}{article}{
   author={Betancor, J. J.},
   author={Fari{\~n}a, J. C.},
   author={Sanabria, A.},
   title={On Littlewood-Paley functions associated with Bessel operators},
   journal={Glasg. Math. J.},
   volume={51},
   date={2009},
   number={1},
   pages={55--70}
}

\bib{bfbmt}{article}{
   author={Betancor, J. J.},
   author={Fari{\~n}a, J. C.},
   author={Buraczewski, D.},
   author={Mart{\'{\i}}nez, T.},
   author={Torrea, J. L.},
   title={Riesz transforms related to Bessel operators},
   journal={Proc. Roy. Soc. Edinburgh Sect. A},
   volume={137},
   date={2007},
   number={4},
   pages={701--725}
}
		
\bib{bhnv}{article}{
   author={Betancor, J. J.},
   author={Harboure, E.},
   author={Nowak, A.},
   author={Viviani, B.},
   title={Mapping properties of fundamental operators in harmonic analysis
   related to Bessel operators},
   journal={Studia Math.},
   volume={197},
   date={2010},
   number={2},
   pages={101--140},
}

\bib{k78}{article}{
   author={Kerman, R. A.},
   title={Boundedness criteria for generalized Hankel conjugate
   transformations},
   journal={Canad. J. Math.},
   volume={30},
   date={1978},
   number={1},
   pages={147--153}
}

\bib{L}{article}{
   author={Lacey, M.},
   title={The Two Weight Inequality for the Hilbert Transform: A Primer,\ \ to appear in the Cora Sadosky memorial volume},
   eprint={arXiv1304.5004v3}
  }

\bib{ms}{article}{
   author={Muckenhoupt, B.},
   author={Stein, E. M.},
   title={Classical expansions and their relation to conjugate harmonic
   functions},
   journal={Trans. Amer. Math. Soc.},
   volume={118},
   date={1965},
   pages={17--92}
}

\bib{s88}{article}{
   author={Sawyer, E. T.},
   title={A characterization of two weight norm inequalities for fractional
   and Poisson integrals},
   journal={Trans. Amer. Math. Soc.},
   volume={308},
   date={1988},
   number={2},
   pages={533--545}
}

\bib{v08}{article}{
   author={Villani, M.},
   title={Riesz transforms associated to Bessel operators},
   journal={Illinois J. Math.},
   volume={52},
   date={2008},
   number={1},
   pages={77--89}
}

\bib{w48}{article}{
   author={Weinstein, A.},
   title={Discontinuous integrals and generalized potential theory},
   journal={Trans. Amer. Math. Soc.},
   volume={63},
   date={1948},
   pages={342--354}
}

\bib{WZ}{book}
{author={Wheeden, R.L.}
author={Zygmund, A.}
title={Measure and Integral, An Introduction to Real Analysis}
date={1977}
series={Chapman \& Hall/CRC Pure and Applied Mathematics (Book 43)}
}

\bib{yy}{article}{
   author={Yang, Da.},
   author={Yang, Do.},
   title={Real-variable characterizations of Hardy spaces associated with
   Bessel operators},
   journal={Anal. Appl. (Singap.)},
   volume={9},
   date={2011},
   number={3},
   pages={345--368}
}

\end{biblist}
\end{bibdiv}

\end{document}